%
%
%
%
 
\documentclass[12pt]{amsart}
\usepackage{latexsym,amssymb,amsmath,tikz, xypic}

\textwidth=16.00cm
\textheight=22.00cm
\topmargin=0.00cm
\oddsidemargin=0.00cm
\evensidemargin=0.00cm
\headheight=0cm
\headsep=1cm
\headsep=0.5cm 
\numberwithin{equation}{section}
\hyphenation{semi-stable}
\setlength{\parskip}{3pt}

\newtheorem{theorem}{Theorem}[section]
\newtheorem{lemma}[theorem]{Lemma}

\theoremstyle{definition}
 
\newtheorem{remark}[theorem]{Remark}
\newtheorem{example}[theorem]{Example}


\begin{document}

 
\title{Regularity and $h$-polynomials of edge ideals}
\thanks{Submitted Version: October 16, 2018}

\author[T. Hibi]{Takayuki Hibi}
\address{Department of Pure and Applied Mathematics, Graduate School
of Information Science and Technology, Osaka University, Suita, Osaka
565-0871, Japan}
\email{hibi@math.sci.osaka-u.ac.jp}

\author[K. Matsuda]{Kazunori Matsuda}
\address{Kitami Institute of Technology, Kitami, Hokkaido 090-8507, Japan}
\email{kaz-matsuda@mail.kitami-it.ac.jp}
 
\author[A. Van Tuyl]{Adam Van Tuyl}
\address{Department of Mathematics and Statistics\\
McMaster University, Hamilton, ON, L8S 4L8, Canada}
\email{vantuyl@math.mcmaster.ca}

\keywords{Castelnuovo-Mumford regularity, $h$-polynomials, Hilbert Series,
edge ideals}
\subjclass[2010]{13D02, 13D40, 05C69, 05C70, 05E40}
 
\begin{abstract}
For any two integers $d,r \geq 1$, we show that there exists
an edge ideal $I(G)$ such that the ${\rm reg}\left(R/I(G)\right)$, 
the Castelnuovo-Mumford regularity
of $R/I(G)$, is $r$, and $\deg h_{R/I(G)}(t)$, the degree of the $h$-polynomial 
of $R/I(G)$, is $d$.   Additionally, if $G$ is a graph on $n$ vertices,
we show that ${\rm reg}\left(R/I(G)\right) + \deg h_{R/I(G)}(t) \leq n$.
\end{abstract}
 
\maketitle


\section{Introduction}

Let $I$ be a homogeneous ideal of the polynomial ring $R = k[x_1,\ldots,x_n]$
where $k$ is a field.  Associated to $I$ is a graded minimal free
resolution of the form
\[0 \rightarrow \bigoplus_{j \in \mathbb{N}} R(-j)^{\beta_{p,j}(I)}
\rightarrow \cdots \rightarrow \bigoplus_{j \in \mathbb{N}} R(-j)^{\beta_{1,j}(I)}
\rightarrow R \rightarrow R/I \rightarrow 0\]
where $R(-j)$ denotes the polynomial ring $R$ with its grading twisted by
$j$, and $\beta_{i,j}(I)$ is the ${i,j}$-th graded Betti number.  This
resolution encodes a number of important invariants of $R/I$.  One
such invariant is the {\it (Castelnuovo-Mumford) regularity} of $I$,
which is defined by
\[{\rm reg}(R/I) = \max\{j-i ~|~ \beta_{i,j}(I) \neq 0\}.\]
The {\it Hilbert series} of $R/I$, that is, 
$H_{R/I}(t) = \sum_{j \in \mathbb{N}} \dim_k (R/I)_jt^j$,  can also be read from this resolution;
in particular, if $b_{i,i+j} = \beta_{i,i+j}(I)$, then (see \cite[p. 100]{hhGTM})
\[
H_{R/I}(t) = \frac{\sum_{i}(-1)^i\left(\sum_j b_{i,i+j}t^{i+j}\right)}{(1-t)^n}.
\]
This rational function may or may not be in lowest terms; when we rewrite
$H_{R/I}(t)$ in lowest terms, the Hilbert-Serre theorem (see \cite[Proposition 4.4.1]{BH}) says
\[H_{R/I}(t) = \frac{h_{R/I}(t)}{(1-t)^{\dim(R/I)}} ~~\mbox{with $h(t) \in \mathbb{Z}[t]$
and $h(1) \neq 0$}.\]
The polynomial $h_{R/I}$ is called the {\it $h$-polynomial} of $R/I$.

Given that  ${\rm reg}(R/I)$ and $\deg h_{R/I}(t)$ are both derived from
the graded minimal free resolution, 
one can ask if there is any relationship between
these two invariants.  For example, from \cite[Lemma 4.1.3]{BH}, it
follows that if $I$ has a {\it pure resolution} (for each $i$,
there is at most one $j$ such that $\beta_{i,i+j}(I) \neq 0$), then 
\[\deg h_{R/I}(t) - {\rm reg}(R/I) = \dim (R/I) - {\rm depth} (R/I).\] 
The first two authors initiated a comparison of these two invariants
in \cite{HM1,HM2,HM3}.  It was shown in \cite{HM1} that for all $r,d \geq 1$,
there exists a monomial ideal such that ${\rm reg}(R/I) =r $ and 
${\rm deg}(R/I) = d$; in \cite{HM2}, it shown that this monomial ideal
could be taken to be a lexsegment monomial ideal.  In both cases,
the degrees of the minimal generators of $I$ depend upon on $r$ and/or $d$.    
However, if restrict our family of ideals, one might expect some 
restriction on the values of $r$ and $d$.  For example, it is shown in
\cite{HM3} that for $2 \leq r \leq d$, there exists a binomial edge 
ideal (see \cite{HHHKR,O}) $J_{G}$ with ${\rm reg}(R/J_{G}) =r$ and $\deg h_{R/J_{G}}(t)=d$, and furthermore,
\cite[Theorem 2.1]{SK} says that $\deg h_{R/J_{G}}(t)=1$ if ${\rm reg}(R/J_{G}) =1$. 

The starting point of this paper is to ask what happens if we restrict
to edge ideals.  Recall that if $G = (V(G),E(G))$ is a finite simple graph
on $V(G) = \{x_1,\ldots,x_n\}$, then the {\it edge ideal} is the
ideal $I(G) = ( x_ix_j ~|~ \{x_i,x_j\} \in E ) \subseteq 
R = k[x_1,\ldots,x_n]$.  Our main
result is the perhaps surprising fact that one can obtain the main
result of \cite{HM1} using only edge ideals (unlike \cite{HM1,HM2}
where the degrees of the generators change, our generators always have degree
two):

\begin{theorem}[Theorem \ref{Main1}] \label{maintheorem}
Let $r, d \geq 1$ be integers.    
Then there is a finite simple graph
$G$ with $r = {\rm reg}\left(R/I(G)\right)$ and $d = \deg h_{R/I(G)}(t)$.
\end{theorem}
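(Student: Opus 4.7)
The plan is to build $G$ as a disjoint union of small graphs with individually known invariants. On disjoint vertex sets the edge ideals live in disjoint polynomial rings, so $R/I(G_1\sqcup G_2)\cong R_1/I(G_1)\otimes_k R_2/I(G_2)$; the Hilbert series then multiply, and tensoring the minimal free resolutions shows that
\[
\mathrm{reg}(R/I(G_1\sqcup G_2))=\mathrm{reg}(R_1/I(G_1))+\mathrm{reg}(R_2/I(G_2)), \qquad \deg h_{R/I(G_1\sqcup G_2)}=\deg h_{R_1/I(G_1)}+\deg h_{R_2/I(G_2)}.
\]
With this additivity in hand, it suffices to exhibit a small library of ``atoms'' whose $(\mathrm{reg},\deg h)$-pairs can be combined to reach any prescribed $(r,d)$.

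Three atoms carry most of the weight. A single edge $K_2$ is a complete intersection, giving $(\mathrm{reg},\deg h)=(1,1)$. The star $K_{1,m}$ has chordal complement, so by Fr\"{o}berg's theorem its edge ideal has a linear resolution (i.e.\ $\mathrm{reg}=1$); a direct Hilbert series computation, splitting monomials by whether the central vertex appears, yields $h_{R/I(K_{1,m})}(t)=1+t(1-t)^{m-1}$, of degree exactly $m$. The bowtie $F_2$ (two triangles meeting in a vertex) has non-chordal complement, since the four non-central vertices induce a $C_4$, so $\mathrm{reg}\geq 2$; an induced-matching lower bound together with a cover of the edges of $F_2$ by two cochordal triangles forces $\mathrm{reg}=2$, and counting $\mathrm{Ind}(F_2)$ with $f$-vector $(1,5,4)$ yields $h_{R/I(F_2)}(t)=1+3t$, of degree $1$. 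Combining by disjoint unions, the case $d\geq r$ is handled by $G=K_{1,d-r+1}\sqcup(r-1)K_2$, and the case $\lceil r/2\rceil\leq d<r$ by $G=(r-d)F_2\sqcup(2d-r)K_2$; in each instance the additivity principle immediately produces the prescribed $(r,d)$.

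The remaining region $1\leq d<\lceil r/2\rceil$ is the main obstacle and requires atoms with larger regularity-to-$h$-degree ratio. Concretely, the aim is to produce, for each $k\geq 1$, a graph $H_k$ with $\mathrm{reg}(R/I(H_k))=k$ and $\deg h_{R/I(H_k)}=1$; given such a family, the disjoint union $G=H_{r-d+1}\sqcup(d-1)K_2$ realizes every pair $(r,d)$ with $1\leq d\leq r$. The cases $k=1,2$ are covered by $K_2$ and $F_2$, but for $k\geq 3$ one must construct a graph whose independence complex has $h$-vector of the form $(1,c,0,\dots,0)$ while still containing an induced matching (or other regularity-forcing subconfiguration) of size $k$. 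This is the technical heart of the argument: the regularity lower bound comes from induced matchings, the regularity upper bound from a cover of the edge set by cochordal subgraphs of the correct size, and the linearity of $h_{R/I(H_k)}$ from a careful Mayer--Vietoris decomposition of the monomial basis of $R/I(H_k)$. Arranging the independence complex so that all three of these constraints hold simultaneously, for every $k\geq 3$, is where the bulk of the work lies.
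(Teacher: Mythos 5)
Your reduction is sound and in fact mirrors the paper's: additivity of both invariants over disjoint unions (the paper's Lemma \ref{Lemma}) reduces everything to exhibiting graphs realizing $(1,d)$ for all $d$ and $(r,1)$ for all $r$ (the paper's Remark \ref{remark1}). Your atoms check out: $K_{1,m}$ gives $(1,m)$ (the paper uses $K_{d,d}$ instead, to the same effect), and your bowtie $F_2$ is literally the paper's $G_{\rm ribbon}$, with the same $h$-polynomial $1+3t$ and regularity $2$. The case splits for $d\geq r$ and $\lceil r/2\rceil\leq d<r$ are arithmetic and correct.

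However, there is a genuine gap, and you have named it yourself: you never construct the graphs $H_k$ with ${\rm reg}(R/I(H_k))=k$ and $\deg h_{R/I(H_k)}(t)=1$ for $k\geq 3$, and without them the region $d<\lceil r/2\rceil$ is not covered. This is not a routine detail to be filled in --- it is the entire technical content of the paper. Note why it is delicate: the obvious way to force regularity $k$ is an induced matching of size $k$, but a disjoint union of $k$ edges has $\deg h = k$, not $1$; every extra unit of regularity you gain by naive means tends to cost you a unit in the degree of the $h$-polynomial, which is exactly the tension your sketch does not resolve. The paper resolves it with a specific vertex-addition operation $G^S$ (Lemma \ref{LemmaA}): adding one new vertex joined to a carefully sized dominating set $S$ preserves the regularity (via \cite[Lemma 2.10]{DHS} applied to the exact sequence $0\to (R'/(I(G^S):x_{n+1}))(-1)\to R'/I(G^S)\to R'/(I(G^S)+(x_{n+1}))\to 0$) while shifting the $h$-vector from $(1,h_1,h_2)$ to $(1,h_1+1,h_2-1)$; iterating this kills the quadratic term. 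Lemma \ref{LemmaB} then builds $G^{(r)}$ for all $r\geq 3$ by induction, applying this operation $2^{r-1}-1$ times at each stage. Your proposed substitute (``induced matching lower bound, cochordal cover upper bound, Mayer--Vietoris decomposition of the monomial basis'') is a plausible toolkit but is only a statement of intent; as written, the proof of the hardest case is missing.
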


\noindent
Our proof of Theorem \ref{maintheorem} uses the following strategy.
We show that if $G$ is a graph with ${\rm reg}\left(R/I(G)\right) = r$ and
$\deg h_{R/I(G)}(t) = d$, then one can construct a new graph $G'$
with ${\rm reg}(R/I(G')) = r+1$ and
$\deg h_{R/I(G')}(t) = d+1$.  The proof of Theorem \ref{maintheorem}
then reduces to constructing graphs with $({\rm reg}\left(R/I(G)\right),\deg h_{R/I(G)}(t))
= (1,d)$ or $(r,1)$ for any integers $d,r \geq 1$.

Interestingly, ${\rm reg}\left(R/I(G)\right)$ and $\deg h_{R/I(G)}(t)$ are related
by the following inequality.
\begin{theorem}[Theorem \ref{maintheorem2}] \label{secondresult}
Let $G$ be a graph on $n$ vertices.  Then
\[{\rm reg}\left(R/I(G)\right) + \deg h_{R/I(G)}(t) \leq n.\]
\end{theorem}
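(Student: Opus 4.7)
The plan is to split the desired inequality into two separate bounds that sum to $n$:
\[
\deg h_{R/I(G)}(t) \le \alpha(G) \quad \text{and} \quad {\rm reg}(R/I(G)) \le n - \alpha(G),
\]
where $\alpha(G)$ is the independence number of $G$. The first bound is essentially automatic: since $I(G)$ is a squarefree monomial ideal, $R/I(G)$ is the Stanley--Reisner ring of the independence complex ${\rm Ind}(G)$, whose Krull dimension equals $\alpha(G)$, and the $h$-polynomial of any Stanley--Reisner ring has degree at most its Krull dimension.

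For the second bound I plan to argue by induction on $n$. The case where $G$ has no edges is trivial. Otherwise, fix a minimum vertex cover $C$ of $G$ and pick any $x \in C$; then $x$ must have at least one neighbor, since otherwise $C \setminus \{x\}$ would be a smaller vertex cover. The short exact sequence
\[
0 \longrightarrow \bigl(R/(I(G):x)\bigr)(-1) \xrightarrow{\;\cdot x\;} R/I(G) \longrightarrow R/(I(G), x) \longrightarrow 0
\]
yields
\[
{\rm reg}(R/I(G)) \le \max\bigl\{{\rm reg}(R/(I(G):x))+1,\; {\rm reg}(R/(I(G),x))\bigr\}.
\]
A direct computation identifies $R/(I(G), x)$ with $k[V(G)\setminus\{x\}]/I(G-x)$ and $R/(I(G):x)$ with $k[V(G)\setminus N[x]]/I(G-N[x])$, both up to a polynomial extension in the variables of $N(x)$ or $\{x\}$ that preserves regularity. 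The induction hypothesis therefore bounds these regularities by $(n-1) - \alpha(G-x)$ and $(n-|N[x]|) - \alpha(G-N[x])$, respectively.

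The principal step, and the main obstacle, is to check that both these quantities, after the shift in the short exact sequence, are at most $n-\alpha(G)$. The inequality $(n-1) - \alpha(G-x) \le n - \alpha(G)$ is equivalent to $\alpha(G-x) \ge \alpha(G)-1$, which is immediate by restricting any maximum independent set of $G$ to $V(G)\setminus\{x\}$. The inequality $(n-|N[x]|) - \alpha(G-N[x])+1 \le n - \alpha(G)$ is equivalent to $\alpha(G-N[x]) \ge \alpha(G) - |N[x]| + 1$, which in turn amounts to showing that some maximum independent set $I$ of $G$ satisfies $|I \cap N[x]| \le |N[x]|-1$. A short case split handles this: if $x \in I$, then $I \cap N[x] = \{x\}$ and the bound uses $|N[x]| \ge 2$, while if $x \notin I$ then $x \in N[x] \setminus I$ gives the bound for free. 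Putting the two cases together closes the induction; summing with the bound on $\deg h_{R/I(G)}(t)$ yields the theorem.
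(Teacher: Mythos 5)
Your proposal is correct, and the first half coincides with the paper's: both of you bound $\deg h_{R/I(G)}(t)$ by $\alpha(G)$ via the Stanley--Reisner interpretation of $R/I(G)$ as the face ring of ${\rm Ind}(G)$. The second half diverges. The paper simply quotes the known bound ${\rm reg}(R/I(G)) \leq \alpha'(G)$ (the matching number, \cite[Theorem 6.7]{HVT}), observes $\alpha'(G) \leq \beta(G)$, and closes with $\alpha(G)+\beta(G)=n$. You instead prove ${\rm reg}(R/I(G)) \leq n - \alpha(G) = \beta(G)$ from scratch by induction on $n$, using the exact sequence $0 \to (R/(I(G):x))(-1) \to R/I(G) \to R/(I(G),x) \to 0$ for a non-isolated vertex $x$, the identifications $R/(I(G),x) \cong k[V\setminus\{x\}]/I(G-x)$ and $R/(I(G):x) \cong k[V\setminus N[x]]/I(G-N[x]) \otimes_k k[x]$, and the combinatorial inequalities $\alpha(G-x) \geq \alpha(G)-1$ and $\alpha(G-N[x]) \geq \alpha(G)-|N[x]|+1$; I checked these steps and the case split on whether $x$ lies in a maximum independent set, and they all go through (your choice of $x$ from a minimum vertex cover is only used to guarantee $|N[x]| \geq 2$, which any non-isolated vertex provides). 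What each approach buys: the paper's route is shorter and, via the matching number, actually establishes a stronger intermediate bound on the regularity; your route is self-contained, avoiding the external citation entirely, at the cost of proving only the weaker bound ${\rm reg}(R/I(G)) \leq \beta(G)$ --- which is all the theorem needs.
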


\noindent
We provide examples to show that this bound is sharp.  Note that
Theorem \ref{secondresult} gives a new upper bound on the regularity
of edge ideals, i.e.,
${\rm reg}\left(R/I(G)\right) \leq n - \deg h_{R/I(G)}(t)$, which complements
past research on the regularity of edge ideals (see \cite{Ha,HVT}).

\noindent
{\bf Acknowledgments.} 
The first and last author began discussions on this project at the 
BIRS (Banff International Research Station) workshop entitled
{\it New Trends in Syzygies}, organized by 
Giulio Caviglia and Jason McCullough and held in June 2018.  We thank the organizers and 
BIRS for providing a stimulating research environment. 
Experiments with {\it Macaulay2} \cite{GS} led to many of our results. 
Hibi and Matsuda's research was supported by JSPS KAKENHI 
26220701 and 17K14165. 
Van Tuyl's research was supported by NSERC Discovery Grant 2014-03898.
This work was also made possible by the facilities of the Shared Hierarchical 
Academic Research Computing Network (SHARCNET: {\tt www.sharcnet.ca}) and 
Compute/Calcul Canada.


\section{Background}\label{prelim-sec}

We recall the relevant graph theory and commutative algebra background.
We continue to use the notation and terminology from the introduction.

Let $G = (V(G),E(G))$ be a finite simple graph on the vertex
set $V(G)  = \{x_1,\ldots,x_n\}$ and edge set $E(G)$ consisting of unordered
pairs of distinct elements of $V(G)$, that is, if $e \in E(G)$, then
$e = \{x_i,x_j\}$ for some $i\neq j$.  If $G$ is clear, we 
write $V$, respectively $E$, for $V(G)$, respectively $E(G)$.

 We say that there is
a {\it path} between the vertices $x_i$ and $x_j$ if there is
a collection of edges $\{e_1,e_2,\ldots,e_t\}$ such that $x_i \in e_1,
x_j \in e_t$, and $e_\ell \cap e_{\ell+1} \neq \emptyset$ for all 
$\ell=1,\ldots,r-1$.
A graph $G$ is {\it connected} if there is a path between
every pair of vertices of $G$; otherwise, $G$ is said to be
{\it disconnected}.   A {\it connected component} of $G$ is 
a maximal connected subgraph.

Given any subset $W \subseteq V(G)$, the {\it induced subgraph}
of $G$ on $W$ is the graph $G_W = (W,E(G_W))$ where 
$E(G_W) = \{e \in E(G) \mid e \subseteq W \}$.  Given
an $x \in V(G)$, the set of {\it neighbours} of $x$ is the 
set $N(x) = \{y ~|~ \{x,y\} \in E(G)\}$.

A set of  vertices $W \subseteq V$ is an {\it independent set} if for all
$e \in E$, $e \not\subseteq W$.  An independent set is a {\it maximal
independent set} if it is maximal with respect to inclusion.  We let
$\alpha(G)$ denote the size of the largest maximal independent set.
Using the independent sets, we can build a simplicial complex.  In particular,
the {\it independence complex} of $G$ is the simplicial complex:
\[{\rm Ind}(G) = \{ W \subseteq V ~|~ W \mbox{ is an independent set}\}.\]
Note that $\alpha(G)$ is the cardinality of the largest element in ${\rm Ind}(G)$.

A set of vertices $W \subseteq V$ is a {\it vertex cover} if for
all $e \in E$, $e \cap W \neq \emptyset$.  A vertex cover is a
{\it minimal vertex cover} if it is minimal with respect to inclusion.
We let $\beta(G)$ denote the size of the smallest minimal vertex cover.
There is duality between independent sets and vertex covers;  specifically,
$W \subseteq V$ is an independent set if and only if $V \setminus W$ is
a vertex cover.  Consequently
\begin{equation}\label{alphabeta}
 \alpha(G) + \beta(G) = n.
\end{equation}

A set of edges $\{e_1,\ldots,e_s\} \subseteq E$ is said to be a
{\it matching} if none of the edges share a common vertex.   
We let $\alpha'(G)$ denote the size of the maximum matching in $G$.
We then always have the following inequality:
\begin{equation}\label{inequality}
\alpha'(G) \leq \beta(G).
\end{equation}
Indeed, for any matching $\{e_1,\ldots,e_s\} \subseteq E$, 
any minimal vertex cover must contain at least one vertex from each $e_i$

Finally, we will require the following bound on the regularity of $R/I(G)$.

\begin{theorem}[{\cite[Theorem 6.7]{HVT}}]\label{hvtbound}
For any finite simple graph $G$, 
${\rm reg}\left(R/I(G)\right) \leq \alpha'(G).$
\end{theorem}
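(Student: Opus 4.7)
The plan is to proceed by induction on the number of vertices $|V(G)|$. In the base case where $G$ has no edges, $I(G) = (0)$ and $\alpha'(G) = 0$, so both sides of the inequality vanish.

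For the inductive step, assume $G$ has at least one edge. For any vertex $x \in V(G)$, the short exact sequence
\[
0 \longrightarrow \bigl(R/(I(G) : x)\bigr)(-1) \xrightarrow{\,\cdot x\,} R/I(G) \longrightarrow R/(I(G), x) \longrightarrow 0
\]
yields the standard bound
\[
{\rm reg}(R/I(G)) \leq \max\bigl\{{\rm reg}(R/(I(G):x)) + 1,\ {\rm reg}(R/(I(G),x))\bigr\}.
\]
A direct unpacking gives $I(G) : x = (N(x)) + I(G \setminus N[x])$ and $(I(G), x) = (x) + I(G \setminus x)$; killing the variables in $N(x)$ or $\{x\}$ (which preserves regularity, as tensoring with a polynomial ring in a free variable does) turns the two quotients into $k[V(G) \setminus N[x]]/I(G \setminus N[x])$ and $k[V(G) \setminus \{x\}]/I(G \setminus x)$ respectively. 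Since both induced subgraphs have strictly fewer vertices, the inductive hypothesis refines our bound to
\[
{\rm reg}(R/I(G)) \leq \max\bigl\{\alpha'(G \setminus N[x]) + 1,\ \alpha'(G \setminus x)\bigr\}.
\]

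The crucial step is to choose $x$ so that the right-hand side is at most $\alpha'(G)$. I would take $x$ to be an endpoint of some edge $\{x, y\}$ in a maximum matching $M$ of $G$, which exists because $G$ has an edge. The inequality $\alpha'(G \setminus x) \leq \alpha'(G)$ is automatic since $G \setminus x$ is a subgraph of $G$. For the other branch, given any matching $M'$ of $G \setminus N[x]$, each edge of $M'$ has both endpoints outside $N[x] \supseteq \{x, y\}$, so $M' \cup \{\{x, y\}\}$ is a matching in $G$; hence $|M'| + 1 \leq \alpha'(G)$, giving $\alpha'(G \setminus N[x]) + 1 \leq \alpha'(G)$. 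Combining the two cases closes the induction.

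The main obstacle, and the genuine combinatorial content of the argument, is the delicate choice of $x$: one needs $x$ to be saturated by a maximum matching in order for $\alpha'$ to strictly decrease after deleting $N[x]$. The algebraic ingredients (identifying the colon and quotient ideals with edge ideals of induced subgraphs, together with the short-exact-sequence regularity bound) are routine once the right vertex has been selected.
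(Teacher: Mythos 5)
Your argument is correct. Note first that the paper does not prove this statement at all; it is quoted from H\`a--Van Tuyl \cite[Theorem 6.7]{HVT}, so the only meaningful comparison is with that source. The original proof there is organized around edge deletion and the notion of a splitting edge (in the sense of Eliahou--Kervaire splittings), inducting on the edge set of a (hyper)graph; your proof instead inducts on the vertex set using the short exact sequence $0 \to (R/(I(G):x))(-1) \to R/I(G) \to R/(I(G),x) \to 0$, the identifications $I(G):x = (N(x)) + I(G\setminus N[x])$ and $(I(G),x) = (x) + I(G \setminus x)$, and the standard bound ${\rm reg}(B) \leq \max\{{\rm reg}(A), {\rm reg}(C)\}$ for a short exact sequence $0 \to A \to B \to C \to 0$. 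This is the colon-ideal technique of Dao--Huneke--Schweig (the paper's reference \cite{DHS}) and H\`a's survey \cite{Ha}, and it is cleaner and more self-contained than the original splitting-edge argument, at the cost of proving only the graph case rather than the hypergraph statement of \cite{HVT}. All the individual steps check out: the computation of the colon ideal, the fact that discarding free variables (equivalently, tensoring with a polynomial ring) preserves regularity, and the combinatorial inequalities $\alpha'(G \setminus x) \leq \alpha'(G)$ and $\alpha'(G\setminus N[x]) + 1 \leq \alpha'(G)$. One small remark: the choice of $x$ is less delicate than you suggest. You do not need $x$ to be saturated by a \emph{maximum} matching; any non-isolated vertex $x$ works, since for any edge $\{x,y\}$ whatsoever, a matching of $G \setminus N[x]$ avoids both $x$ and $y$ and can be enlarged by $\{x,y\}$ to a matching of $G$. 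So the genuinely load-bearing ingredients are the algebraic ones, and the combinatorial step is automatic.
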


\section{Main Theorem}

In this section we will prove our main theorem:

\begin{theorem}\label{Main1}
Let $r, d \geq 1$ be integers.    
Then there is a finite simple graph
$G$ with $r = {\rm reg}\left(R/I(G)\right)$ and $d = \deg h_{R/I(G)}(t)$.
\end{theorem}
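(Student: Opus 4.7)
The plan is to follow the strategy sketched in the introduction: establish a step-up construction $(r, d) \to (r+1, d+1)$ and separately produce families realizing the pairs $(1, d)$ and $(r, 1)$ for all $d, r \geq 1$. For the step-up I would take $G' = G \sqcup K_2$, the disjoint union of $G$ with a fresh edge on two new vertices. Because the two pieces live on disjoint variable sets, $R/I(G') \cong R/I(G) \otimes_k k[y_1, y_2]/(y_1 y_2)$, so the Tor modules split by K\"unneth and regularity adds: $\mathrm{reg}(R/I(G')) = \mathrm{reg}(R/I(G)) + \mathrm{reg}(k[y]/(y_1 y_2)) = r + 1$. Similarly Hilbert series multiply, and since $h_{k[y]/(y_1 y_2)}(t) = 1 + t$ has degree $1$, the new $h$-polynomial is $h_{R/I(G)}(t)\cdot(1+t)$, so $\deg h_{R/I(G')}(t) = d + 1$.

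For the base $(1, d)$ family I would take the star $K_{1, d}$. Its complement $K_d \sqcup \{x_0\}$ is chordal, so Fr\"oberg's theorem gives $\mathrm{reg}(R/I(K_{1, d})) = 1$; a direct Stanley--Reisner computation on $\mathrm{Ind}(K_{1,d})$ yields $h(t) = 1 + t(1-t)^{d-1}$, a polynomial of degree exactly $d$ with $h(1)=1$.

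I expect the main obstacle to be the base $(r, 1)$ family. The case $r = 1$ is covered by any complete graph $K_n$, whose $h$-polynomial is $1 + (n-1)t$. The case $r = 2$ is covered by the bowtie (two triangles sharing a common vertex): a direct computation gives $h(t) = 1 + 3t$, while Fr\"oberg forces $\mathrm{reg} \geq 2$ (its complement $K_{2,2} \sqcup K_1$ contains an induced $C_4$) and Theorem~\ref{hvtbound} with $\alpha'(\text{bowtie}) = 2$ gives $\mathrm{reg} \leq 2$. For general $r \geq 3$ I would seek a connected graph $G_r$ with induced matching number $r$ and $\alpha'(G_r) = r$ (so that reg is pinned to $r$ via Katzman's lower bound together with Theorem~\ref{hvtbound}) whose independence complex has $h$-vector $(1, h_1, 0, 0, \ldots, 0)$; the subtle point is to engineer the cancellations in the $h$-vector while keeping the matching structure intact. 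Once all base cases are in hand, for any $(r, d)$ with $r, d \geq 1$ start from a graph realizing $(1, d-r+1)$ when $d \geq r$ or $(r-d+1, 1)$ when $r \geq d$, and apply the step-up $\min(r,d) - 1$ times to reach $(r, d)$.
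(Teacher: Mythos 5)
Your overall architecture matches the paper's exactly: a disjoint-edge step-up (the paper's Lemma \ref{Lemma} and Remark \ref{remark1}), plus base families realizing $(1,d)$ and $(r,1)$. Your step-up argument is sound, and your $(1,d)$ family is a valid alternative to the paper's: you use the star $K_{1,d}$ with $h(t)=1+t(1-t)^{d-1}$, whereas the paper uses $K_{d,d}$ with $h(t)=-(1-t)^d+2$; both have degree $d$ and regularity $1$ by Fr\"oberg. Your $r=1$ and $r=2$ cases of the $(r,1)$ family are also correct — your bowtie is precisely the paper's $G_{\rm ribbon}$, with the same $h$-polynomial $1+3t$.

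However, there is a genuine gap at exactly the point you flag as "the main obstacle": the $(r,1)$ family for $r\geq 3$ is never constructed. You describe the desired properties (induced matching number equal to matching number equal to $r$, $h$-vector $(1,h_1,0,\ldots,0)$) but give no graph achieving them, and this is the entire technical heart of the paper. The paper resolves it with a cone-type operation $G^S$ (add one vertex joined to a carefully sized dominating set $S$), proved in Lemma \ref{LemmaA} via the exact sequence $0\to (R'/(I(G^S):x_{n+1}))(-1)\to R'/I(G^S)\to R'/(I(G^S)+(x_{n+1}))\to 0$, which increments $h_1$ and decrements $h_2$ while preserving regularity (the latter by a lemma of Dao--Huneke--Schweig, not by the induced-matching/matching sandwich you propose — indeed, it is not clear your sandwich could be maintained through whatever modification kills $h_2$). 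Lemma \ref{LemmaB} then iterates this operation $2^{r-1}-1$ times on $G^{(r-1)}$ plus a disjoint edge to build $G^{(r)}$ with $h(t)=1+(2^r-1)t$ and regularity $r$. Without some such mechanism for trading the degree-two coefficient into the linear one while controlling regularity, your proof of the base case — and hence of the theorem — is incomplete.
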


In order to show this theorem, we will prepare some lemmata. 

\begin{lemma}[{\cite[Lemma 3.2]{HT}}]
Let $R_{1} = k[x_{1}, \ldots, x_{n'}]$ and 
$R_{2} = k[x_{n' + 1}, \ldots, x_{n}]$ be polynomial rings 
over a field $k$.  Let $I_{1}$, respectively $I_2$,
 be a nonzero homogeneous ideal of 
$R_{1}$, respectively $R_{2}$. 
We write $R$ for $R_{1} \otimes_{k} R_{2} = k[x_{1}, \ldots, x_{n}]$ and 
regard $I_{1} + I_{2}$ as a homogeneous ideal of $R$. 
Then 
\begin{eqnarray*}
{\rm reg}(R/I_{1} + I_{2}) & =& {\rm reg}(R_{1}/I_{1}) + {\rm reg}(R_{2}/I_{2}), 
~~\mbox{and} \\
H_{R/I_{1} + I_{2}} (t) & =& H_{R_{1}/I_{1}} (t) \cdot H_{R_{2}/I_{2}} (t). 
\end{eqnarray*}
\end{lemma}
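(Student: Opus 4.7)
The plan is to deduce both statements from the graded $k$-algebra isomorphism
\[R/(I_1 + I_2) \;\cong\; (R_1/I_1) \otimes_k (R_2/I_2),\]
which holds because $I_1$ and $I_2$ are generated in disjoint variable sets. From this, the graded decomposition $(R/(I_1+I_2))_j = \bigoplus_{a+b=j} (R_1/I_1)_a \otimes_k (R_2/I_2)_b$ gives, upon taking $k$-dimensions and summing over $j$, the product formula $H_{R/(I_1+I_2)}(t) = H_{R_1/I_1}(t)\cdot H_{R_2/I_2}(t)$ immediately.

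For the regularity equality, I would realize the minimal graded free resolution of $R/(I_1+I_2)$ over $R$ as a tensor product. Let $\mathbb{F}_\bullet$ be a minimal graded free resolution of $R_1/I_1$ over $R_1$, and $\mathbb{G}_\bullet$ one of $R_2/I_2$ over $R_2$. Since $R$ is a flat (polynomial) extension of each $R_i$, the base changes $\mathbb{F}_\bullet \otimes_{R_1} R$ and $\mathbb{G}_\bullet \otimes_{R_2} R$ are minimal graded free resolutions of $R/I_1 R$ and $R/I_2 R$, respectively. Because these ideals use disjoint variables, $\mathrm{Tor}_i^R(R/I_1 R,\, R/I_2 R) = 0$ for $i \geq 1$, so the total complex $\mathbb{H}_\bullet := (\mathbb{F}_\bullet \otimes_{R_1} R) \otimes_R (\mathbb{G}_\bullet \otimes_{R_2} R)$ is a free resolution of $R/(I_1+I_2)$. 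Its differentials have entries in the maximal ideal $(x_1,\ldots,x_n)$, inherited from the minimality of $\mathbb{F}_\bullet$ and $\mathbb{G}_\bullet$, so $\mathbb{H}_\bullet$ is itself minimal.

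Minimality yields the product formula for Betti numbers
\[\beta_{i,j}^R(R/(I_1+I_2)) = \sum_{\substack{a+b=i \\ c+d=j}} \beta_{a,c}^{R_1}(R_1/I_1)\cdot \beta_{b,d}^{R_2}(R_2/I_2).\]
Taking the maximum of $j - i = (c-a) + (d-b)$ over nonzero summands gives $\mathrm{reg}(R_1/I_1) + \mathrm{reg}(R_2/I_2)$: one direction is the inequality $j - i \leq \mathrm{reg}(R_1/I_1) + \mathrm{reg}(R_2/I_2)$, and the other follows by choosing $(a,c)$ and $(b,d)$ attaining each regularity, at which point the nonvanishing hypothesis on $I_1$ and $I_2$ ensures that the corresponding product of Betti numbers is positive.

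The main technical obstacle is the rigorous verification that $\mathbb{H}_\bullet$ is both exact and minimal after the base change to $R$; the Tor-vanishing is the essential step, and it hinges entirely on the disjointness of the variable sets, ensuring that no syzygies mix generators of $I_1 R$ with those of $I_2 R$. Once this structural fact is in hand, both the Hilbert series identity and the additivity of regularity follow by routine bookkeeping.
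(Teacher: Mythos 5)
Your argument is correct. Note that the paper itself gives no proof of this lemma --- it is quoted verbatim from \cite[Lemma 3.2]{HT} --- so there is no internal argument to compare against; your route (the graded isomorphism $R/(I_1+I_2)\cong (R_1/I_1)\otimes_k(R_2/I_2)$ for the Hilbert series, and the tensor product of minimal free resolutions, with minimality and acyclicity coming from the disjointness of the variables and flatness over $k$, for the regularity) is the standard proof and is essentially the one in the cited reference. The one point worth making explicit is that in the Betti number formula all summands are nonnegative, so the extremal term $\beta_{a,c}^{R_1}(R_1/I_1)\beta_{b,d}^{R_2}(R_2/I_2)>0$ cannot be cancelled --- which you implicitly use and which completes the lower bound on the regularity.
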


By virtue of this lemma, one has:

\begin{lemma}\label{Lemma}
Let $G$ be a simple graph, and let 
$G_{1}, \ldots G_{\ell}$ be the connected components of $G$. 
Then
\begin{eqnarray*}
{\rm reg}\left(R/I(G)\right) & = &\sum_{i = 1}^{\ell} {\rm reg}\left(R_{i}/I(G_{i})\right),  
~~\mbox{and}~~ \deg h_{R/I(G)}(t)  =  \sum_{i = 1}^{\ell} \deg h_{R_{i}/I(G_{i})}(t), 
\end{eqnarray*}
where $R_{i} = k\left[x_{j} \mid j \in V(G_{i})\right]$ for $i=1,\ldots,\ell$, 
and 
$R = R_1 \otimes_k \cdots \otimes_k R_\ell$.
\end{lemma}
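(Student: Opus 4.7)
The plan is to reduce directly to the previous lemma by induction on $\ell$. The key observation is that if $G_1,\ldots,G_\ell$ are the connected components of $G$, then $V(G) = V(G_1) \sqcup \cdots \sqcup V(G_\ell)$ is a disjoint union, and no edge of $G$ crosses between components. Consequently, viewing each $I(G_i)$ as an ideal of $R_i = k[x_j \mid j \in V(G_i)]$, we have $R = R_1 \otimes_k \cdots \otimes_k R_\ell$ and $I(G) = I(G_1) + \cdots + I(G_\ell)$ as ideals of $R$. This is exactly the setup required by the previous lemma.

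For the regularity claim, I would induct on $\ell$. The base case $\ell=1$ is trivial, and for the inductive step set $G' = G_1 \cup \cdots \cup G_{\ell-1}$ with corresponding polynomial ring $R' = R_1 \otimes_k \cdots \otimes_k R_{\ell-1}$. Applying the previous lemma to $I(G') \subseteq R'$ and $I(G_\ell) \subseteq R_\ell$ gives $\mathrm{reg}(R/I(G)) = \mathrm{reg}(R'/I(G')) + \mathrm{reg}(R_\ell/I(G_\ell))$, and then the inductive hypothesis finishes the sum formula. (One should note that the lemma requires nonzero ideals, so components consisting of isolated vertices must be handled separately, but such components contribute $I(G_i) = (0)$ with $\mathrm{reg} = 0$ and $h$-polynomial $1$ of degree $0$, so they contribute nothing to either side and can be discarded.)

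For the $h$-polynomial claim, the same induction combined with the multiplicativity of the Hilbert series yields
\[
H_{R/I(G)}(t) \;=\; \prod_{i=1}^{\ell} H_{R_i/I(G_i)}(t) \;=\; \frac{\prod_{i=1}^{\ell} h_{R_i/I(G_i)}(t)}{(1-t)^{\sum_{i=1}^{\ell} \dim(R_i/I(G_i))}}.
\]
By the Hilbert--Serre form recalled in the introduction, each $h_{R_i/I(G_i)}(t)$ satisfies $h_{R_i/I(G_i)}(1) \neq 0$, so the product $\prod_i h_{R_i/I(G_i)}(t)$ also does not vanish at $t=1$. Since Krull dimension is additive over tensor products of finitely generated $k$-algebras, $\sum_i \dim(R_i/I(G_i)) = \dim(R/I(G))$, and the displayed expression is already in lowest terms. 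Hence $h_{R/I(G)}(t) = \prod_{i=1}^{\ell} h_{R_i/I(G_i)}(t)$, and taking degrees gives the required identity $\deg h_{R/I(G)}(t) = \sum_{i=1}^\ell \deg h_{R_i/I(G_i)}(t)$.

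There is no serious obstacle here; the only subtlety is remembering to verify that the product form of the Hilbert series is genuinely in lowest terms (so that one can legitimately identify the numerator as the $h$-polynomial of $R/I(G)$), and to handle isolated vertices properly when invoking the previous lemma's nonzero hypothesis.
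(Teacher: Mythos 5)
Your proposal is correct and follows the same route as the paper, which simply derives this lemma from the preceding Hoa--Tam lemma (additivity of regularity and multiplicativity of Hilbert series for ideals in disjoint variable sets); you have merely spelled out the induction on $\ell$ and the verification that $\prod_i h_{R_i/I(G_i)}(t)$ over $(1-t)^{\sum_i \dim(R_i/I(G_i))}$ is already in lowest terms. Your attention to the nonzero-ideal hypothesis (isolated vertices) and to the $h(1)\neq 0$ check is a welcome bit of care that the paper leaves implicit.
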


\begin{remark}\label{remark1}
By Lemma \ref{Lemma}, if $G$ is graph with
${\rm reg}\left(R/I(G)\right) = r$ and $\deg h_{R/I(G)}(t) = d$, then the graph
$G'$ which is the disjoint union of $G$ and a single edge on two
new vertices $\{z_1,z_2\}$ has ${\rm reg}(R'/I(G')) = r+1$ and $\deg h_{R'/I(G')}(t) = d+1$ where $R' = R \otimes_k k[z_1,z_2]$.
To prove Theorem
\ref{Main1} we need to show that for each $r \geq 1$, there
exists a graph $G$ with 
${\rm reg}(R/I(G)) = r$ and $\deg h_{R/I(G)}(t) =1$, and for each
$d \geq 1$, there is a 
graph $G$ with ${\rm reg}(R/I(G)) = 1$ and $\deg h_{R/I(G)}(t) =d$.  
We now work towards this goal.
\end{remark}

\begin{example}\label{CompleteBipartite}
Let $d \geq 1$ be a positive integer and let $K_{d, d}$ be the 
complete bipartite graph, i.e., the graph with $V(K_{d,d}) = \{x_1,\ldots,x_d,
y_1,\ldots,y_d\}$ and $E(K_{d,d}) = \{x_iy_j ~|~ 1 \leq i, j\leq d\}$. 
By virtue of Fr\"oberg's Theorem \cite[Theorem 1]{F}, 
one has ${\rm reg}(R/I(K_{d, d})) = 1$. 
In addition, the Hilbert series of $R/I(K_{d,d})$ can be computed
from the graded minimal free resolution (e.g., see \cite[Theorem 5.2.4]{J});  
in particular:
\[ H_{R/I(K_{d, d})}(t) = \frac{-(1 - t)^{d} + 2}{(1 - t)^{d}}.\] 
Hence $\deg h_{R/I(K_{d, d})}(t) = d$. 
\end{example}
We now require the following graph construction.
Let $G$ be a simple graph on $V(G) = \{x_1,\ldots,x_n\}$. 
For $S \subset V(G)$, the graph $G^{S}$ is defined by  
\begin{itemize}
	\item $V(G^{S}) = V(G) \cup \{x_{n + 1}\}$, where $x_{n + 1}$ is a new vertex; and 
	\item $E(G^{S}) = E(G) \cup \{ \{x_i, x_{n  + 1}\} \mid x_i \in S \}$. 
\end{itemize}

\begin{lemma}\label{LemmaA}
Let $G$ be a graph and let $S \subset V(G)$. 
Assume that 
\begin{itemize}
	\item $\dim R/I(G) \geq 2$ and  $h_{R/I(G)} (t) = 1 + h_{1}t + h_{2}t^{2}$; 
	\item ${\rm reg}\left(R/I(G)\right) \geq 2$; 
	\item $|S| = |V(G)| - \dim R/I(G) + 2$; and
	\item For any $u \in V(G) \setminus S$, there exists $u' \in S$ such that $\{u, u'\} \in E(G)$.   
\end{itemize}
Then 
\[
H_{R'/I(G^{S})}(t) = \frac{1 + (h_{1} + 1)t + (h_{2} - 1)t^{2}}{(1 - t)^{\dim R/I(G)}}
~~\mbox{and}~~ {\rm reg}\left(R'/I(G^{S})\right) = r,
\] 
where $R' = R \otimes_{k} k[x_{n + 1}]$. 
\end{lemma}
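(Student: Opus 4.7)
\medskip\noindent\textbf{Proof plan.}
The plan is to analyze $R'/I(G^S)$ via the short exact sequence induced by the new variable $x_{n+1}$,
\[
0 \to \bigl(R'/(I(G^S) : x_{n+1})\bigr)(-1) \xrightarrow{\;\cdot\, x_{n+1}\;} R'/I(G^S) \to R'/(I(G^S) + (x_{n+1})) \to 0.
\]
Since $x_{n+1}$ does not appear in any generator of $I(G)$ and $(x_{n+1}x_i):x_{n+1} = (x_i)$ for each $x_i \in S$, a direct computation yields
\[
(I(G^S) : x_{n+1}) = I(G) + (x_i : x_i \in S)
\quad\text{and}\quad
(I(G^S) + (x_{n+1})) = I(G) + (x_{n+1}).
\]
In particular the right-hand term is isomorphic to $R/I(G)$, contributing Hilbert series $(1 + h_1 t + h_2 t^2)/(1-t)^{\dim R/I(G)}$ and regularity $r := {\rm reg}(R/I(G))$.

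The crucial step is to argue that $W := V(G) \setminus S$ is an independent set of $G$. Combined with the identity $|W| = \dim R/I(G) - 2$ from hypothesis (3), this makes
\[
R'/\bigl(I(G) + (x_i : x_i \in S)\bigr) \;\cong\; k[x_j : j \in W][x_{n+1}]
\]
a polynomial ring in $\dim R/I(G) - 1$ variables, with Hilbert series $1/(1-t)^{\dim R/I(G) - 1}$ and regularity $0$. Additivity of Hilbert series across the short exact sequence then yields
\[
H_{R'/I(G^S)}(t) = \frac{1 + h_1 t + h_2 t^2}{(1-t)^{\dim R/I(G)}} + \frac{t}{(1-t)^{\dim R/I(G) - 1}} = \frac{1 + (h_1+1)t + (h_2-1)t^2}{(1-t)^{\dim R/I(G)}},
\]
which is exactly the claimed formula.

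For the regularity, the standard upper bound applied to the same exact sequence gives ${\rm reg}(R'/I(G^S)) \leq \max\{0+1,\; r\} = r$, using that the leftmost term is a shift by $-1$ of a polynomial ring and that $r \geq 2$. Conversely, $G$ is the induced subgraph of $G^S$ on $V(G)$, so the well-known monotonicity of regularity under induced-subgraph inclusion gives ${\rm reg}(R'/I(G^S)) \geq {\rm reg}(R/I(G)) = r$, whence equality. The main obstacle is the combinatorial claim that $W$ is independent: if some edge of $G$ lay entirely inside $W$, then $I(G_W) \neq 0$ and the colon quotient would contribute a nontrivial $h$-polynomial rather than just $1$, spoiling the clean numerator $1 + (h_1+1)t + (h_2-1)t^2$. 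One must therefore leverage the cardinality $|W| = \dim R/I(G) - 2$, the degree-$\le 2$ shape of $h_{R/I(G)}(t)$, and hypothesis (4) (each $u \in W$ has a neighbor in $S$) to exclude such an edge; this combinatorial exclusion, rather than the homological bookkeeping, is the technical heart of the proof.
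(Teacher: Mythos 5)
Your setup coincides with the paper's: the same short exact sequence in $x_{n+1}$, the same identifications $I(G^S)+(x_{n+1})=I(G)+(x_{n+1})$ and $I(G^S):x_{n+1}=I(G)+(x_i : x_i\in S)$, and the same Hilbert series bookkeeping. Your treatment of the regularity (exact-sequence upper bound plus the induced-subgraph lower bound $\mathrm{reg}(R'/I(G^S))\ge \mathrm{reg}(R/I(G))$) is a legitimate, arguably more self-contained, substitute for the paper's citation of Lemma 2.10 of Dao--Huneke--Schweig.

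The gap is the one you flagged yourself and then deferred: you never prove that $W=V(G)\setminus S$ is independent. Moreover, this step cannot be carried out from the four stated hypotheses, because they do not force $S$ to be a vertex cover. Concretely, let $G$ be the disjoint union of the ten-vertex graph $G_{3}$ of Example \ref{r=3} and one extra edge $\{x_{11},x_{12}\}$, so that $n=12$, $h_{R/I(G)}(t)=1+8t+7t^{2}$, $\dim R/I(G)=4$ and $\mathrm{reg}(R/I(G))=4$. Take $S=V(G)\setminus\{x_{1},x_{2}\}$. Then $|S|=10=n-\dim R/I(G)+2$ and every vertex of $W=\{x_{1},x_{2}\}$ has the neighbour $x_{5}\in S$, so all four hypotheses hold; but $\{x_{1},x_{2}\}$ is an edge, the colon ideal is $I(G)+(x_i : x_i\in S)\ne (x_i : x_i\in S)$, and the exact sequence gives numerator $1+9t+6t^{2}-t^{3}+t^{4}$ rather than the claimed $1+9t+6t^{2}$. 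So the independence of $V(G)\setminus S$ (equivalently, that $S$ is a vertex cover) is an additional hypothesis that must be imposed, not a consequence to be derived; it holds in every application of the lemma in the paper, and the paper's own proof uses it silently when asserting $I(G^{S}):(x_{n+1})=(x_{i}\mid x_i\in S)$. Your argument becomes complete once that assumption is added, but as written the ``technical heart'' you promise does not exist.
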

\begin{proof}
By the assumptions and the definition of $G^{S}$, we have 
$I(G^{S}) + (x_{n + 1}) = (x_{n + 1}) + I(G)$, 
and $I(G^{S}):(x_{n + 1}) = (x_{i} \mid x_i \in S)$. 
Hence $R'/(I(G^{S}) + (x_{n + 1})) \cong R/I(G)$ and 
$R'/(I(G^{S}) : (x_{n + 1})) \cong k[x_{i} \mid x_i \not\in S] \otimes_{k} k[x_{n + 1}]$. 
Thus, by the additivity of Hilbert series on the short exact sequence 
\[
0 \to \left(R'/(I(G^{S}) : (x_{n + 1}))\right)(-1) \xrightarrow{\ \times x_{n + 1} \ } R'/I(G^{S}) \to R'/(I(G^{S}) + (x_{n + 1})) \to 0, 
\]
we have 
\begin{eqnarray*}
H_{R'/I(G^{S})} (t) &=& H_{R'/(I(G^{S}) + (x_{n + 1}))} (t) + t \cdot 
H_{R'/(I(G^{S}) : (x_{n + 1}))} (t) \\
&=& H_{R/I(G)} (t) + \frac{t}{(1 - t)^{|V(G)| - |S| + 1}} \\
&=& \frac{1 + h_{1}t + h_{2}t^{2}}{(1 - t)^{\dim R/I(G)}} + \frac{t}{(1 - t)^{\dim R/I(G)-1}} \\ 
&=& \frac{1 + (h_{1} + 1)t + (h_{2} - 1)t^{2}}{(1 - t)^{\dim R/I(G)}}. 
\end{eqnarray*} 
Furthermore, we have ${\rm reg}\left(R'/I(G^{S})\right) = r$ 
by virtue of \cite[Lemma 2.10]{DHS}. 
\end{proof}

\begin{example}\label{ribbon}
Let $G$ be the two disjoint edges $\{x_1,x_2\}$ and $\{x_3,x_4\}$
 and $S = V(G)$.   Then $G^S = 
G_{\rm ribbon}$ where $G_{\rm ribbon}$ is the following graph: 

\bigskip

\begin{xy}
	\ar@{} (0,0);(50,0) *{\text{$G_{\rm ribbon} =$}};
	\ar@{} (0,0);(90, 0)  *++!D{x_5} *\cir<2pt>{} = "A";
	\ar@{-} "A";(70, 8)   *++!D{x_1} *\cir<2pt>{} = "B";
	\ar@{-} "A";(70, -8)  *++!U{x_2} *\cir<2pt>{} = "C";
	\ar@{-} "A";(110, 8)  *++!D{x_3}  *\cir<2pt>{} = "D";
	\ar@{-} "A";(110, -8) *++!U{x_4} *\cir<2pt>{} = "E";
	\ar@{-} "B";"C";
	\ar@{-} "D";"E";
\end{xy}

\bigskip 

\noindent
Since $I(G) = (x_1x_2,x_3x_4)$ is a complete intersection, we have 
$\displaystyle H_{R/I(G)} (t) = \frac{1 + 2t + t^{2}}{(1 - t)^{2}}$ and 
${\rm reg} (R/I(G)) = 2$. 
Hence, by applying Lemma \ref{LemmaA}, one has 
\[ H_{R'/I(G_{\rm ribbon})}(t) = \frac{1 + 3t}{(1 - t)^{2}} ~~\mbox{and}~~
 {\rm reg}(R'/I(G_{\rm ribbon})) = 2.\]
So $\deg h_{R'/I(G_{\rm ribbon})}(t) = 1$. 
\end{example}

\begin{example}\label{r=3}
Let $G_{0}$ be the union of $G_{\rm ribbon}$ and a disjoint edge $\{x_6,x_7\}$:  

\bigskip

\begin{xy}
	\ar@{} (0,0);(40,0) *{\text{$G_{0} = $}};
	\ar@{} (0,0);(80, 0) *++!D{x_5} *\cir<2pt>{} = "A";
	\ar@{-} "A";(60, 8) *++!D{x_1} *\cir<2pt>{} = "B";
	\ar@{-} "A";(60, -8) *++!U{x_2} *\cir<2pt>{} = "C";
	\ar@{-} "A";(100, 8) *++!D{x_3} *\cir<2pt>{} = "D";
	\ar@{-} "A";(100, -8) *++!U{x_4} *\cir<2pt>{} = "E";
	\ar@{-} "B";"C";
	\ar@{-} "D";"E";
	\ar@{} (0,0);(110, 8) *++!D{x_6} *\cir<2pt>{} = "F11";
	\ar@{-} "F11";(110, -8) *++!U{x_7} *\cir<2pt>{} = "F12";
\end{xy}

\bigskip

\noindent Then $\displaystyle H_{R/I(G_{0})}(t) = \frac{1 + 3t}{(1 - t)^{2}} \cdot \frac{1 + t}{1 - t} = \frac{1 + 4t + 3t^{2}}{(1 - t)^{3}}$ and ${\rm reg}(R/I(G_{0})) = 2 + 1 = 3$ by virtue of Lemma \ref{Lemma} and 
Example \ref{ribbon}. 
Now we set $S_{i} = V(G_{i}) \setminus \{x_7\}$ and 
$G_{i + 1} = G^{S_{i}}_{i}$ for $i = 0, 1, 2$. 
Then, by using Lemma \ref{LemmaA} repeatedly, one has 
\[
H_{R'/I(G_{3})}(t) = \frac{1 + 7t}{(1 - t)^{3}}~~\mbox{and}~~ 
{\rm reg}\left(R'/I(G_{3})\right) = 3, 
\]
where $R' = k[x_{1}, \ldots, x_{10}]$ 
and $G_{3}$ is the following graph:

\bigskip

\begin{xy}
	\ar@{} (0,0);(40,0) *{\text{$G_{3} = $}};
	\ar@{} (0,0);(80, 0) *++!U{x_5} *\cir<2pt>{} = "A";
	\ar@{-} "A";(60, 8) *++!R{x_1} *\cir<2pt>{} = "B";
	\ar@{-} "A";(60, -8) *++!U{x_2} *\cir<2pt>{} = "C";
	\ar@{-} "A";(100, 8) *++!L{x_3} *\cir<2pt>{} = "D";
	\ar@{-} "A";(100, -8) *++!U{x_4} *\cir<2pt>{} = "E";
	\ar@{-} "B";"C";
	\ar@{-} "D";"E";
	\ar@{} (0,0);(110, 8) *++!D{x_6} *\cir<2pt>{} = "F11";
	\ar@{-} "F11";(110, -8) *++!U{x_7} *\cir<2pt>{} = "F12";
	\ar@{} (0,0);(64, 20) *++!R{x_8} *\cir<2pt>{} = "G";
	\ar@{} (0,0);(96, 20) *++!L{x_9} *\cir<2pt>{} = "H";
	\ar@{} (0,0);(80, 24) *++!D{x_{10}} *\cir<2pt>{} = "I";
	\ar@{-} "A";"G";
	\ar@{-} "B";"G";
	\ar@{-} "C";"G";
	\ar@{-} "D";"G";
	\ar@{-} "E";"G";
	\ar@{-} "F11";"G";
	\ar@{-} "A";"H";
	\ar@{-} "B";"H";
	\ar@{-} "C";"H";
	\ar@{-} "D";"H";
	\ar@{-} "E";"H";
	\ar@{-} "F11";"H";
	\ar@{-} "G";"H";
	\ar@{-} "A";"I";
	\ar@{-} "B";"I";
	\ar@{-} "C";"I";
	\ar@{-} "D";"I";
	\ar@{-} "E";"I";
	\ar@{-} "F11";"I";
	\ar@{-} "G";"I";
	\ar@{-} "H";"I";
\end{xy}

\bigskip

\end{example}

Lemma \ref{LemmaA} says that, given $r \geq 2$,  we can construct 
a graph $G'$ for which 
$\deg h_{R/I(G')}(t) = 1$ and ${\rm reg}(R/I(G')) = r$ 
from a graph $G$ for which 
$\deg h_{R/I(G)}(t) = 2$ and ${\rm reg}\left(R/I(G)\right) = r$, provided
the hypotheses of Lemma \ref{LemmaA} are met.  
We use this idea in the next lemma.
\begin{lemma} \label{LemmaB}
Given an integer $r \geq 3$, we put 
\[
Y_{r} = \{ y_{1, 1}, y_{2, 1} \ldots, y_{r-2, 1}, y_{1, 2}, y_{2, 2}, \ldots, y_{r - 2, 2} \}, 
\]
\[
Z_{r} = \bigcup_{i = 1}^{r - 2} \left\{ z^{(i)}_{1}, z^{(i)}_{2}, \ldots, z^{(i)}_{2^{i + 1} - 1} \right\}
\]
and
\[
X = \{x_{1}, x_{2}, x_{3}, x_{4}, x_{5}\}. 
\]
Let $G^{(r)}$ be the graph on $X \cup Y_{r} \cup Z_{r}$ such that 
\begin{itemize}
	\item the induced subgraph $G^{(r)}_{X, Y_{r}}$ is the following: \\
	
		\begin{xy}
			\ar@{} (0,0);(40,0) *{\text{$G^{(r)}_{X, Y_{r}} =$}};
			\ar@{} (0,0);(80, 0) *++!D{x_{1}} *\cir<2pt>{} = "A";
			\ar@{-} "A";(60, 8) *++!D{x_{2}} *\cir<2pt>{} = "B";
			\ar@{-} "A";(60, -8) *++!U{x_{3}} *\cir<2pt>{} = "C";
			\ar@{-} "A";(100, 8) *++!D{x_{4}} *\cir<2pt>{} = "D";
			\ar@{-} "A";(100, -8) *++!U{x_{5}} *\cir<2pt>{} = "E";
			\ar@{-} "B";"C";
			\ar@{-} "D";"E";
			\ar@{} (0,0);(110, 8) *++!D{y_{1, 1}} *\cir<2pt>{} = "F11";
			\ar@{-} "F11";(110, -8) *++!U{y_{1, 2}} *\cir<2pt>{} = "F12";
			\ar@{} (0,0);(120,0) *{\cdots};
			\ar@{} (0,0);(130, 8) *++!D{y_{r-2, 1}} *\cir<2pt>{} = "F21";
			\ar@{-} "F21";(130, -8) *++!U{y_{r-2, 2}} *\cir<2pt>{} = "F22";
		\end{xy}
	\item the induced subgraph $G^{(r)}_{Z_{r}}$ is a complete graph, i.e.,
all vertices are adjacent; and
	\item for all $1 \leq i \leq r - 2$ and for all $1 \leq j \leq 2^{i + 1} - 1$, 
	\[
	N_{G}\left(z^{(i)}_{j}\right) = X \cup \{y_{1, 1}, y_{2, 1}, \ldots, y_{r-2, 1}\} \cup Z_{r} \setminus \left\{ z^{(i)}_{j} \right\}. 
	\]
\end{itemize}
Let $R^{(r)} = k\left[ \{X \cup Y_{r} \cup Z_{r}\} \right]$ be the polynomial ring over $k$ 
whose variables equal to $X \cup Y_{r} \cup Z_{r}$. 
Then 
\begin{enumerate}
	\item $\displaystyle H_{R^{(r)} / I(G^{(r)})} (t) = \frac{1 + (2^{r} - 1)t}{(1 - t)^{r}}$, that is, $\deg h_{R^{(r)}/I(G^{(r)})}(t) = 1$, and
	\item ${\rm reg} \left(R^{(r)} / I(G^{(r)})\right) = r$. 
\end{enumerate}
\end{lemma}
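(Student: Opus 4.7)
The plan is to build $G^{(r)}$ by interleaving two operations: the disjoint union with a single edge (handled by Lemma \ref{Lemma}), which incorporates one pair $\{y_{i,1},y_{i,2}\}$ at a time, and the one-vertex extension $G\mapsto G^{S}$ of Lemma \ref{LemmaA}, which introduces the $z$-vertices one at a time. Example \ref{r=3} is already this recipe for $r=3$, which serves as the base case.

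I would first form the disjoint union of $G_{{\rm ribbon}}$ on $X$ with the single edge $\{y_{1,1},y_{1,2}\}$; by Example \ref{ribbon} and Lemma \ref{Lemma}, the result has $h$-polynomial $(1+3t)(1+t)=1+4t+3t^{2}$, Krull dimension $3$, and regularity $3$. Then for each $i=1,2,\ldots,r-2$ I would execute an inner loop, followed (unless $i=r-2$) by a single disjoint-union step. In the inner loop, starting from a graph whose $h$-polynomial is $1+2^{i+1}t+(2^{i+1}-1)t^{2}$ and whose dimension and regularity both equal $i+2$, I would apply Lemma \ref{LemmaA} exactly $2^{i+1}-1$ times, at each application choosing $S$ to be the current vertex set minus $\{y_{1,2},\ldots,y_{i,2}\}$ and naming the new vertex $z^{(i)}_{j}$ at the $j$-th application. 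Since each application decreases the coefficient of $t^{2}$ by one and increases the coefficient of $t$ by one while fixing dimension and regularity, after these $2^{i+1}-1$ applications the $h$-polynomial becomes $1+(2^{i+2}-1)t$. If $i<r-2$, I would then take the disjoint union with the edge $\{y_{i+1,1},y_{i+1,2}\}$; Lemma \ref{Lemma} gives $h$-polynomial $(1+(2^{i+2}-1)t)(1+t)=1+2^{i+2}t+(2^{i+2}-1)t^{2}$, dimension $i+3$, and regularity $i+3$, restoring the invariant for the next outer step. After the final outer step, the graph has $h$-polynomial $1+(2^{r}-1)t$, dimension $r$, and regularity $r$, which is exactly (1) and (2).

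The bulk of the proof is verifying the hypotheses of Lemma \ref{LemmaA} at each inner application. The dimension and regularity lower bounds are immediate (both at least $3$). The $h$-polynomial of the current graph has degree at most two at the start of each inner loop and, by the transformation $h_{1}\mapsto h_{1}+1,\ h_{2}\mapsto h_{2}-1$, continues to have degree at most two thereafter. The size condition $|S|=|V(G)|-\dim R/I(G)+2$ is the bookkeeping heart of the argument: at stage $i$ one has $|V(G)\setminus S|=i=\dim R/I(G)-2$, and each extension simultaneously adds one vertex to $V(G)$ and to the next choice of $S$, so the identity is preserved. Finally, every excluded vertex $y_{k,2}$ has its unique neighbor $y_{k,1}$ inside $S$, fulfilling the last hypothesis.

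The main obstacle is checking that the graph produced by this iteration coincides with $G^{(r)}$ as described. Its induced subgraph on $X\cup Y_{r}$ equals $G^{(r)}_{X,Y_{r}}$ by construction. The set $Z_{r}$ becomes a clique because, whenever a new $z$-vertex is introduced, every previously existing $z$-vertex lies in $S$; combined with the edges recorded by the choice of $S$ at each stage, this produces the prescribed neighborhood of each $z^{(i)}_{j}$. Tracing these adjacencies stage by stage is the delicate bookkeeping step once the framework above is in place.
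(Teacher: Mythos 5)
Your proposal is correct and is essentially the paper's proof with the induction on $r$ unrolled into an explicit double iteration: the paper builds $G^{(r)}$ from $G^{(r-1)}$ by adjoining the disjoint edge $\{y_{r-2,1},y_{r-2,2}\}$ and then applying Lemma \ref{LemmaA} $2^{r-1}-1$ times with $S$ equal to the current vertex set minus $\{y_{1,2},\ldots,y_{r-2,2}\}$, which is exactly your inner loop, with the same invariants ($h$-polynomial $1+2^{i+1}t+(2^{i+1}-1)t^2$, dimension and regularity $i+2$) and the same verification of the size and neighbour hypotheses. The only caveat, shared equally by the paper (which simply asserts $G_{2^{r-1}-2}=G^{(r)}$), is the final identification of the iteratively built graph with the stated $G^{(r)}$.
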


\begin{proof}
We prove this lemma by induction on $r \geq 3$. 
The graph of  Example \ref{r=3} is $G^{(3)}$; we showed 
that $\displaystyle H_{R^{(3)} / I(G^{(3)})} (t) = \frac{1 + 7t}{(1 - t)^{3}}$ 
and ${\rm reg} \left(R^{(3)} / I(G^{(3)})\right) = 3$. 

Assume $r > 3$. 
Let $G'$ be the union of $G^{(r - 1)}$ and a disjoint edge $\{ y_{r-2, 1}, y_{r-2, 2} \}$. 
Let $R' = R^{(r - 1)} \otimes_{k} k[y_{r-2, 1}, y_{r-2, 2}]$. 
Then
\begin{eqnarray*}
H_{R'/I(G')}(t) &=& H_{R^{r-1}/I(G^{(r-1)})}(t) \cdot \frac{1 + t}{1 - t} = \frac{1 + (2^{r-1} - 1)t}{(1 - t)^{r-1}} \cdot \frac{1 + t}{1 - t} \\
&=& \frac{1 + 2^{r-1}t + (2^{r-1} - 1)t^{2}}{(1 - t)^{r}}
\end{eqnarray*}
and 
\[
{\rm reg}(R'/I(G')) = {\rm reg}\left(R^{(r-1)}/I(G^{(r-1)})\right) + 1 = r - 1 + 1 = r
\]
by the induction hypothesis and Lemma \ref{LemmaA}. 

Let $S_{0} = X \cup \{ y_{1, 1}, y_{1, 2}, \ldots, y_{r - 2, 1} \} \cup Z_{r - 1}$.  
Then $|S_{0}| = r + 3 + |Z_{r - 1}|$ and
\begin{eqnarray*}
|V(G')| - \dim R'/I(G') + 2 &=& |X| + |Y_{r-1}| +  |Z_{r - 1}| + 2 - r + 2 \\
&=& 5 + 2(r - 3) + |Z_{r - 1}| + 4 - r \\
&=& r + 3 + |Z_{r - 1}|. 
\end{eqnarray*}
Hence, by virtue of Lemma \ref{LemmaA}, one has 
\[
H_{R_{0}/I(G_{0})}(t) = \frac{1 + (2^{r-1} + 1)t + (2^{r-1} - 2)t^{2}}{(1 - t)^{r}}
~~\mbox{and}~~{\rm reg}(R_{0}/I(G_{0})) = r, 
\]
where $R_{0} = R' \otimes_{k} k\left[z^{(r-2)}_{1}\right]$, $G_{0} = (G')^{S_{0}}$, and $V(G_{0}) = V(G') \cup \left\{z^{(r-2)}_{1}\right\}$. 

Now, for each $1 \leq j \leq 2^{r-1} - 2$, we define $R_{j}$, $S_{j}$ and $G_{j}$ inductively: 
\begin{itemize}
	\item $R_{j} = R_{j-1} \otimes_{k} k\left[z^{(r-2)}_{j+1}\right]$;
	\item $S_{j} = S_{j-1} \cup \left\{ z^{(r-2)}_{j+1} \right\}$; and
	\item $G_{j} = G^{S_{j}}_{j - 1}$. 
\end{itemize}
Then $R_{2^{r-1} - 2} = R^{(r)}$, $G_{2^{r-1} - 2} = G^{(r)}$, and one has 
\[
H_{R^{(r)}/I\left(G^{(r)}\right)}(t) = \frac{1 + (2^{r} - 1)t}{(1 - t)^{r}} 
~~\mbox{and}~~ {\rm reg}\left(R^{(r)}/I(G^{(r)})\right) = r
\]
by using Lemma \ref{LemmaA} repeatedly. 
\end{proof}

We are now in a position to finish the proof of Theorem \ref{Main1}. 

\begin{proof}[Proof (of Theorem \ref{Main1}).]
We discuss each of the following three cases. 

{\bf Case 1.} Suppose that $1 \leq r \leq d$. 
Let $G$ be the union of $K_{d - r + 1, d - r + 1}$ and $(r-1)$ disjoint edges. 
By virtue of Lemma \ref{Lemma} and Example \ref{CompleteBipartite}, one has
\[
{\rm reg}\left(R/I(G)\right) = 1 + (r - 1) = r~~\mbox{and}~~
\deg h_{R/I(G)}(t) = (d - r + 1) + (r - 1) = d. 
\]

{\bf Case 2.} Suppose  that $r, d \geq 1$ are integers with $r - d = 1$. 
Let $G$ be the union of $G_{\rm ribbon}$ and 
$(r -  2)$ disjoint edges. 
By virtue of Lemma \ref{Lemma} and Example \ref{ribbon}, one has
\[
{\rm reg}\left(R/I(G)\right) = 2 + (r  - 2) = r,  
~~\mbox{and}~~
\deg h_{R/I(G)}(t) = 1 + (r - 2) = r - 1 = d. 
\]

{\bf Case 3.} Suppose that  $r, d \geq 1$ are integers with $r - d > 1$.
Let $G$ be the union of the graph $G^{(r - d + 1)}$ of Lemma \ref{LemmaB} and 
$(d-1)$ disjoint edges. 
By virtue of Lemma \ref{Lemma} and \ref{LemmaB}, one has
\[
{\rm reg}\left(R/I(G)\right) = (r - d + 1) + (d - 1) = r,  
~~\mbox{and}~~
\deg h_{R/I(G)}(t) = 1 + (d - 1) = d. 
\]
\end{proof}

\section{Comparing the regularity and $h$-polynomial for fixed $n$}

Theorem \ref{Main1} shows that for all $(r,d) \in \mathbb{N}^2_{\geq 1}$, 
there exists a finite simple graph $G$ with 
$\left({\rm reg}\left(R/I(G)\right),
\deg h_{R/I(G)}(t)\right) = (r,d)$.  However, if we fix $n = |V(G)|$, then 
the regularity of $R/I(G)$ and the degree of the $h$-polynomial
must also satisfy the following inequality:

\begin{theorem} \label{maintheorem2}
 Let $G$ be a finite simple graph on $n$ vertices.
Then 
\[
\deg h_{R/I(G)}(t) + {\rm reg}\left(R/I(G)\right) \leq n. 
\]

\end{theorem}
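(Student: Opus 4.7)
The plan is to bound $\deg h_{R/I(G)}(t)$ and ${\rm reg}(R/I(G))$ separately by combinatorial invariants of $G$, and then combine them using \eqref{alphabeta}.

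First, I would observe that $R/I(G)$ is the Stanley--Reisner ring of the independence complex ${\rm Ind}(G)$, so
\[
\dim R/I(G) = \dim {\rm Ind}(G) + 1 = \alpha(G).
\]
Writing the Hilbert series in its standard simplicial form,
\[
H_{R/I(G)}(t) = \frac{\sum_{i=0}^{\alpha(G)} f_{i-1}({\rm Ind}(G))\, t^{i}(1-t)^{\alpha(G)-i}}{(1-t)^{\alpha(G)}},
\]
the numerator is a polynomial of degree at most $\alpha(G)$, and its value at $t=1$ is $f_{\alpha(G)-1}({\rm Ind}(G))>0$. Hence this fraction is already in the lowest-terms Hilbert--Serre form, and so
\[
\deg h_{R/I(G)}(t) \;\le\; \alpha(G).
\]

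Next, Theorem \ref{hvtbound} gives ${\rm reg}(R/I(G)) \le \alpha'(G)$, and \eqref{inequality} gives $\alpha'(G) \le \beta(G)$. Combining these,
\[
{\rm reg}(R/I(G)) \;\le\; \beta(G).
\]

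Finally, adding the two inequalities and invoking \eqref{alphabeta} yields
\[
\deg h_{R/I(G)}(t) + {\rm reg}(R/I(G)) \;\le\; \alpha(G) + \beta(G) \;=\; n,
\]
as desired. The main conceptual observation is that $\deg h$ is naturally controlled by $\alpha(G)$ via the Stanley--Reisner description, while ${\rm reg}$ is naturally controlled by $\beta(G)$ via the matching bound, so the complementarity $\alpha(G)+\beta(G)=n$ closes the gap with no slack remaining. There is no serious obstacle; the only subtlety is verifying that the explicit numerator in the Stanley--Reisner Hilbert series does not admit further $(1-t)$ factors, and this is immediate from $f_{\alpha(G)-1}({\rm Ind}(G))>0$.
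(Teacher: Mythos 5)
Your proof is correct and follows essentially the same route as the paper: bound $\deg h_{R/I(G)}(t)$ by $\alpha(G)$ via the Stanley--Reisner form of the Hilbert series, bound ${\rm reg}(R/I(G))$ by $\beta(G)$ via Theorem \ref{hvtbound} and \eqref{inequality}, and add using \eqref{alphabeta}. Your extra check that the numerator does not vanish at $t=1$ (so the fraction is genuinely in Hilbert--Serre lowest terms) is a nice bit of care that the paper leaves implicit.
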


\begin{proof}
Via the Stanley-Reisner correspondence, the edge ideal $I(G)$ is
associated to the independence complex ${\rm Ind}(G)$.
The Hilbert series of $R/I(G)$ can then be expressed as
\[ H_{R/I(G)}(t) = \sum_{i=0}^d \frac{f_{i-1}t^i}{(1-t)^i}\]
(see \cite[Theorem 6.2.1]{hhGTM})
where $f_{j-1}$ is the number of independent sets of cardinality $j$ in $G$ (in
other words, this in the number of faces of ${\rm Ind}(G)$ of dimension $j-1$).
In particular, $d = \alpha(G)$, the size of the largest independent set.
It follows that $\deg h_{R/I(G)}(t) \leq \alpha(G)$.
By combining Theorem \ref{hvtbound} and the inequality \eqref{inequality},
we have the bound 
${\rm reg}\left(R/I(G)\right) \leq \alpha'(G) \leq \beta(G)$.  
Thus
\[\deg h_{R/I(G)}(t) + {\rm reg}\left(R/I(G)\right) \leq \alpha(G) + \beta(G) = n,\]
as desired, where the last equality is \eqref{alphabeta}.
\end{proof}

\begin{remark}
For an alternative proof, one can use \cite[Corollary 4.3]{R}
to show that $\deg h_{R/I(G)}(t) \leq (n - \beta(G))$.
\end{remark}

\begin{example}
The upper bound of Theorem \ref{maintheorem2} is sharp. 
In fact, we can give two families of graphs such that the equality $\deg h_{R/I(G)}(t) + {\rm reg}\left(R/I(G)\right) = n$ holds. 
For the first family, let $n = 2m$ and let $G$ be the union of $m$ 
disjoint edges. Then $\deg h_{R/I(G)}(t) = {\rm reg}\left(R/I(G)\right) = m$. 
For the second family, let $G = K_{1, n - 1}$ be the star graph. 
Then $\deg h_{R/I(G)}(t) = n - 1$ and ${\rm reg}\left(R/I(G)\right) = 1$. 
\end{example}

\begin{remark} We end with an observation based upon our computer
experiments.  For any graph $G$ with at least one edge, we have
${\rm reg}(R/I(G)) \geq 1$ and ${\rm deg}(R/I(G)) \geq 1$.
If we fix an $n = |V(G)|$, it is natural to ask if we can describe all
pairs $(r,d) \in \mathbb{N}^2_{\geq 1}$ for which there is 
a graph $G$ on $n$ vertices with $r = {\rm reg}\left(R/I(G)\right)$
and $d = \deg h_{R/I(G)}(t)$.   Theorem \ref{maintheorem2} 
implies that $r+d \leq n$.  Furthermore,
note that $\alpha'(G) \leq \lfloor \frac{n}{2} \rfloor$,
so we must also have $r \leq \lfloor \frac{n}{2} \rfloor$
by Theorem \ref{hvtbound}. 

However, these inequalities are not enough to desribe all the
pairs $(r,d)$ that may be realizable.   For example, when $n=9$,
we computed  $\left({\rm reg}\left(R/I(G)\right),\deg h_{R/I(G)}(t) \right)$
for all 274668 graphs on nine vertices.  We observed that for all such
$G$,
\[\left({\rm reg}\left(R/I(G)\right),\deg h_{R/I(G)}(t) \right)
\not\in \{(3,1),(4,1),(4,2)\}\]
even though these tuples
 satisfy the inequalities $r+d \leq 9$ and $r \leq 4$.
A similar phenomenon was observed for other $n$, thus suggesting
the existence of another bound relating 
${\rm reg}\left(R/I(G)\right)$ and $\deg h_{R/I(G)}(t)$ for a fixed $n$
\end{remark}

\bibliographystyle{plain}

\end{document}